\newtheorem{theorem}{Theorem}
\newtheorem{corollary}[theorem]{Corollary}
\newtheorem{definition}[theorem]{Definition}
\newtheorem{proposition}[theorem]{Proposition}
\newtheorem{remark}[theorem]{Remark}
\newenvironment{proof}[1][Proof]{\noindent\textbf{#1.} }{\ \rule{0.5em}{0.5em}}
\begin{document}

\title{{\large The Ran-Reurings fixed point theorem without partial order: a
simple proof}}
\author{Hichem Ben-El-Mechaiekh\thanks{\textit{e-mail address: }%
hmechaie@brocku.ca} \\
Department of Mathematics and Statistics, Brock University\\
Saint Catharines, Ontario, Canada}
\date{ }
\maketitle

\textbf{Abstract.} The purpose of this note is to generalize the celebrated
Ran and Reurings fixed point theorem to the setting of a space with a binary
relation that is only transitive (and not necessarily a partial order) and a
relation-complete metric. The arguments presented here are simple and
straightforward. It is also shown that extensions by Rakotch and Hu-Kirk of
Edelstein's generalization of the Banach contraction principle to local
contractions on chainable complete metric spaces derive from the theorem of
Ran-Reurings.\bigskip 

\textit{Keywords and Phrases: }Existence and uniqueness of\textit{\ }fixed
point;\textit{\ }contraction and local contraction;\textit{\ }transitive
relation; monotonic chainability; monotonic-complete metric.\bigskip

\textit{2000 Mathematics Subject Classification}: 47H10

\bigskip

\begin{center}
\textit{Dedicated to Professor Andrzej Granas}
\end{center}

\section{Preliminaries}

In 1961, M. Edelstein \cite{Ed} extended the Banach contraction principle by
establishing that every uniform local contraction $f:X\longrightarrow X$ of
an $\varepsilon $-chainable complete metric space $(X,d)$ has a unique fixed
point. In 1962, E. Rakotch \cite{R} refined Edelstein's result to a local
contraction $f$ of a complete metric space containing some rectifiable path
(i.e., a path of finite length\footnote{%
The \textit{length} of \ (continuous) path $\gamma :[0,1]\longrightarrow X$
is $l(\gamma ):=\sup \{L(P):P\in \mathcal{P}[0,1]\}$ where $\mathcal{P}[0,1]$
is the collection of all finite partitions $P=\{0=t_{0}<t_{1}<\ldots
<t_{n}=1\}$ of $[0,1]$, and $L(P)=\tsum\nolimits_{i=1}^{n}d(\gamma
(t_{i-1}),\gamma (t_{i})).$}) joining a given point $x_{0}$ to $f(x_{0}).$

Recall that a metric space $(X,d)$ is said to be $\varepsilon $\textit{%
-chainable} for some $\varepsilon >0,$\ if $\forall x,y\in X,\exists
\{u_{i}\}_{i=0}^{m}$ a\ finite sequence in $X$ such that: 
\begin{equation*}
x=u_{0},u_{m}=y\text{ and }d(u_{i-1},u_{i})<\varepsilon \text{ for all }%
i=1,\ldots ,m.
\end{equation*}

It is readily seen that a connected metric space is $\varepsilon $%
-chainable. Thus, if a metric space $X$ is rectifiably path-connected (i.e.,
any two points in $X$ are joined by a rectifiable path), then it is $%
\varepsilon $-chainable.

A mapping $f$ of a metric space $(X,d)$ onto itself is \textit{a} \textit{%
local contraction} (with constant $0<k<1)$ \textit{at a given point} $x\in
X, $ if there exists $\varepsilon _{x}>0$ such that $y,z\in B(x,\varepsilon
_{x})\Longrightarrow $ $d(f(y),f(z))<kd(y,z)$ \cite{R}. The mapping $f$ is a
local contraction on $X$ if it so at every point of $X.$

Improving on results of R. D. Holmes \cite{H}, Hu and Kirk \cite{HK}
established in 1978 a unique fixed point for a local radial contraction $f$
of a complete metric space containing an element $x_{0}$ joined to $f(x_{0})$
by a rectifiable path. Recall that a self-mapping $f$ of a metric space $%
(X,d)$ is said to be \textit{a local radial contraction at} $x$ if the
weaker condition $d(x,y)<\varepsilon _{x}\Longrightarrow
d(f(x),f(y))<kd(x,y) $ for $x,y$ in $X,$ holds \cite{H}. The mapping $f$ is
a local radial contraction on $X$ if it so at every point of $X.$

The authors in \cite{HK} insightfully noted that for a local radial
contraction $f:X\longrightarrow X,$ Rakotch's result readily reduces to the
Banach contraction principle applied to the restriction of $f$ on a
meaningful subspace of $X,$ namely the set $\tilde{X}$ consisting of those
points of $X$ that can be joined from $x_{0}$ by a rectifiable path. It
turns out that $\tilde{X}$ is kept invariant by $f$ and that $f$ is a
contraction for the \textit{path metric} $\tilde{d}(x,y)=\inf_{\gamma \in
\Gamma (x;y)}l(\gamma )$ on $\tilde{X},$ (where $\Gamma (x;y)$ is the
collection of all rectifiable paths joining $x$ to $y).$ As the completeness
of $(X,d)$ implies that of $(\tilde{X},\tilde{d})$, the Banach contraction
principle thus applies to $f$ on $(\tilde{X},\tilde{d}),$ yielding a fixed
point for $f.$

Before going any further let us recall A. C. M. Ran and C. B. Reurings
original result, which can be seen as a combination of the Banach
contraction principle and the Tarski's fixed point theorem (see e.g.,
Dugundli-Granas \cite{DG} for the two celebrated seminal results). A \textit{%
partial order} on a set $X$ is a binary relation $\preccurlyeq $ that is
reflexive, antisymmetric, and transitive; the pair $(X,\preccurlyeq )$
consisting of a set with a partial order is a\ \textit{poset.}

\begin{theorem}
(\cite{RR}, 2003) Let $(X,\preccurlyeq )$ be a poset where every pair $%
x,y\in X$ has an upper bound and a lower bound. Furthermore, let $d$ be a
metric on $X$ such that $(X,d)$ is a complete metric space. If $%
f:X\longrightarrow X$ is a continuous and monotonic (i.e., either
order-preserving or order-reversing) mapping such that:

(i) $\exists 0<k<1$ with 
\begin{equation*}
d(f(x),f(y))\leq kd(x,y),\ \forall \text{ }x\preceq y,
\end{equation*}

(ii) $\exists x_{0}\in X$ such that $x_{0}$ and $f(x_{0})$ are comparable%
\footnote{%
Two elements $x,y$ in a poset $(X,\preccurlyeq )$ are said to be \textit{%
comparable }if either $x\preccurlyeq y$ or $y\preccurlyeq x.$}.

Then $f$ has a unique fixed point $x^{\ast }\in X$ with $\lim_{n\rightarrow
\infty }f^{n}(x)=x^{\ast },$ $\forall x\in X.$
\end{theorem}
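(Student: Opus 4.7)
The plan is to prove the theorem by constructing a Picard iteration starting from $x_0$, showing it is Cauchy via a geometric-series estimate along a monotone chain, and then leveraging the upper/lower bound hypothesis to propagate convergence to arbitrary starting points, which simultaneously yields uniqueness.

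First, I would reduce to the order-preserving case. If $f$ is order-reversing, replace $f$ by $g := f^2$, noting that $g$ is order-preserving, that any comparability of $x_0$ with $f(x_0)$ implies comparability of $x_0$ with $g(x_0)$, and that the contraction inequality for $f$ propagates to a contraction inequality for $g$ with constant $k^2$ (using that comparability is preserved under a single application of the order-reversing $f$). A fixed point of $g$ that is a limit of $g$-iterates is then shown to be a fixed point of $f$ via continuity and uniqueness. So assume from here on that $f$ is order-preserving, and that $x_0 \preccurlyeq f(x_0)$ (the symmetric case $f(x_0) \preccurlyeq x_0$ is handled identically).

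Next, I would observe that, since $f$ is order-preserving, induction on $n$ yields the monotone chain $x_0 \preccurlyeq f(x_0) \preccurlyeq f^2(x_0) \preccurlyeq \cdots$. In particular, for every $n \geq 0$, the successive iterates $f^n(x_0)$ and $f^{n+1}(x_0)$ are comparable, so hypothesis (i) applies and gives
\begin{equation*}
d(f^{n+1}(x_0), f^{n+2}(x_0)) \leq k\, d(f^n(x_0), f^{n+1}(x_0)),
\end{equation*}
which iterates to $d(f^n(x_0), f^{n+1}(x_0)) \leq k^n d(x_0, f(x_0))$. The standard telescoping/geometric-series argument then shows $\{f^n(x_0)\}$ is Cauchy, hence converges by completeness of $(X,d)$ to some $x^\ast \in X$; continuity of $f$ upgrades the limit to a fixed point, $f(x^\ast) = x^\ast$.

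The main obstacle is the global convergence statement $\lim_{n\to\infty} f^n(x) = x^\ast$ for \emph{every} $x \in X$, which also delivers uniqueness. This is where the upper/lower bound hypothesis does the essential work. Given an arbitrary $x \in X$, choose an upper bound $z$ of the pair $\{x, x_0\}$, so that $x \preccurlyeq z$ and $x_0 \preccurlyeq z$. Since $f$ is order-preserving, these inequalities are preserved under iteration: $f^n(x) \preccurlyeq f^n(z)$ and $f^n(x_0) \preccurlyeq f^n(z)$ for all $n$. Applying (i) along each of these comparable pairs yields
\begin{equation*}
d(f^n(x), f^n(z)) \leq k^n d(x, z), \qquad d(f^n(x_0), f^n(z)) \leq k^n d(x_0, z),
\end{equation*}
so by the triangle inequality $d(f^n(x), f^n(x_0)) \to 0$. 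Combined with $f^n(x_0) \to x^\ast$, this gives $f^n(x) \to x^\ast$. Applying this with $x$ any fixed point of $f$ forces uniqueness. (The lower-bound hypothesis is used symmetrically if the initial reduction produced $f(x_0) \preccurlyeq x_0$ instead.) The delicate point I expect to need care with is ensuring the comparability arguments remain valid both in the order-preserving and order-reversing sub-cases; everything else is a routine Banach-style Cauchy estimate.
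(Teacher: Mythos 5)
Your core argument in the order-preserving case is fine and is essentially the standard Ran--Reurings/Picard argument (it is also the route the paper takes for its more general Theorems 3 and 5): successive iterates are comparable, (i) gives the geometric estimate, completeness and continuity give a fixed point, and an upper bound $z$ of $\{x,x_0\}$ bridges an arbitrary starting point to $x_0$, yielding global convergence and uniqueness. The genuine gap is in your reduction of the order-reversing case to $g:=f^{2}$. You assert that comparability of $x_{0}$ with $f(x_{0})$ implies comparability of $x_{0}$ with $g(x_{0})=f^{2}(x_{0})$. This is false for order-reversing maps: from $x_{0}\preccurlyeq f(x_{0})$ one only gets $f^{2}(x_{0})\preccurlyeq f(x_{0})$, so $x_{0}$ and $f^{2}(x_{0})$ merely lie below the common element $f(x_{0})$ and need not be comparable. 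Concretely, take the poset $\{0,a,b,c\}$ with $0$ a bottom element, $c$ a top element, and $a,b$ incomparable (so every pair has an upper and a lower bound), and define $f(0)=f(a)=f(b)=c$, $f(c)=b$; then $f$ is order-reversing, $a\preccurlyeq f(a)$, yet $f^{2}(a)=b$ is incomparable with $a$. Hence hypothesis (ii) for $g$ is not secured by your argument, and the whole reversing branch of your proof is unsupported; the iterates of an order-reversing map zigzag, and no power of $f$ fixes this by the comparability claim you invoke.

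The repair is to drop the reduction altogether: a monotonic map, whether relation-preserving or relation-reversing, sends a comparable pair to a comparable pair, so $f^{n}(x_{0})$ and $f^{n+1}(x_{0})$ are comparable for every $n$, and since the contraction hypothesis (i) together with the symmetry of $d$ applies to every comparable pair, the estimate $d(f^{n}(x_{0}),f^{n+1}(x_{0}))\leq k^{n}d(x_{0},f(x_{0}))$ holds in both cases; likewise, in the bridging step only the comparability of $f^{n}(x)$ with $f^{n}(z)$ (not its direction) is needed, so the same triangle-inequality argument covers the reversing case. This uniform treatment is exactly how the original proof (and the paper's proof of Theorem 3, where the monotone chain is written ``with no loss of generality'') proceeds.
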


Nieto and Rodr\'{\i}guez-L\'{o}pez \cite{NR} noted in 2005 that the
continuity of the mapping $f$ in Theorem 1 can be replaced by the following
condition:

\textit{if a monotonic sequence} $\{x_{n}\}_{n\in 
\mathbb{N}
}\rightarrow x^{\ast }$ \textit{in} $X,$ \textit{then }$x_{n}$\textit{\ and }%
$x^{\ast }$\textit{\ are consistently comparable for all }$n\in 
\mathbb{N}
$ \textit{(i.e., }$x_{n}\preccurlyeq x^{\ast }$ \textit{for a non-decreasing
sequence).}

The aim of this paper is to extend the A. C. M. Ran and M. C. B. Reurings in 
\cite{RR}, but by considering a space $X$ equipped with a merely \textit{%
transitive} binary relation $\preccurlyeq $ (not a partial order) and with a
so-called relation-complete metric $d,$ and in which, suitable comparable
pairs can be joined by what we call $\varepsilon $-monotonic chains.

This is a significant departure from Ran-Reurings' theorem (Theorem 1) and
from its extensions by Nieto and Rodr\'{\i}guez-L\'{o}pez. Interestingly, we
also show that the theorem of Hu-Kirk can easily be derived.

Set-valued formulations of the results below are easily written and left to
the reader. Also, concrete motivations (whereby a partial order is not
available) can be constructed readily.

\section{Fixed point for a uniform local contraction on comparable elements}

In the remainder of this section, $(X,\preccurlyeq ,d)$ is a triple
consisting of a set $X$ together with a transitive binary relation $%
\preccurlyeq $ and a metric $d$ on $X.$ It should be kept in mind that the
relation $\preccurlyeq $ \textit{is not necessarily a partial order on }$X.$
Expediency imposes the occasional use of $X$ to designate $(X,\preccurlyeq
,d)$ in the absence of any confusion.

We introduce natural concepts of relation-chainability and
relation-completeness.

\begin{definition}
(i) Two elements $x,y$ in $X$ are said to be \textit{comparable} if either $%
x\preccurlyeq y$ or $y\preccurlyeq x.$

(ii) A mapping $f:X\longrightarrow X$ is said to be \textit{monotoni}c if it
is either always \textit{relation-preserving,} i.e., $x\preccurlyeq
y\Longrightarrow f(x)\preccurlyeq f(y)$ or always \textit{relation-reversing}%
, i.e., $x\preccurlyeq y\Longrightarrow f(y)\preccurlyeq f(x)$ for any given 
$x,y\in X.$

(iii) Analogously, a sequence $\{x_{n}\}_{n\in 
\mathbb{N}
}$ in $X$ is \textit{monotonic} if $x_{n}\preccurlyeq x_{n+1}$ for all $n$
or $x_{n+1}\preccurlyeq x_{n}$ for all $n.$

(iv) Two elements $x,y$ in $X$ are joined by an $\varepsilon $-monotonic
chain for some $\varepsilon >0$ if there exists a monotonic sequence $%
\{u_{i}\}_{i=0}^{m}$ in $X$ such that: 
\begin{equation*}
x=u_{0},u_{m}=y\text{ and }d(u_{i-1},u_{i})<\varepsilon \text{ for all }%
i=1,\ldots ,m.
\end{equation*}%
(Note that, by transitivity, $x$ and $y$ must be comparable.)

(v) The space $(X,\preccurlyeq ,d)$ is said to be $\varepsilon $-monotonic
chainable for some $\varepsilon >0,$ if any two comparable elements $x,y$ in 
$X$ are joined by an $\varepsilon $-monotonic chain.

(vi) The metric $d$ is monotonic complete if and only if every monotonic
Cauchy sequence converges in $X.$
\end{definition}

We start with fixed point results for a uniform local contraction on
comparable elements of $(X,\preccurlyeq ,d)$ where $d$ is a
relation-complete metric.

\begin{theorem}
Let $(X,\preccurlyeq ,d)$ be a\textit{\ triple consisting of a metric space }%
$(X,d)$ and a transitive binary relation $\preccurlyeq $ on $X,$ let $%
f:X\longrightarrow X$ be a mapping, and let $\varepsilon >0,$ be such that:

(a) $\exists x_{0}\in X$ such that $x_{0}$ and $f(x_{0})$ are joined by an $%
\varepsilon -$monotonic chain;

(b) $f$ is monotonic;

(c) if $\lim_{n\rightarrow \infty }f^{n}(x_{0})=x^{\ast }\in X,$ then $%
f^{n}(x_{0})$ and $x^{\ast }$ are comparable (consistent with the
monotonicity of $f$)$,\forall n;$

(d) $\exists 0<k<1$ such that for any comparable elements $x,y$ in $X,$ $%
d(x,y)<\varepsilon $ implies $d(f(x),f(y))\leq kd(x,y).$

Then, $f$ has a fixed point $x^{\ast }=\lim_{n\rightarrow \infty
}f^{n}(x_{0})$ provided the metric $d$ is monotonic complete.
\end{theorem}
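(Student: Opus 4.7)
The plan is to show that the orbit $\{f^n(x_0)\}$ is a monotonic Cauchy sequence whose limit, provided by monotonic completeness of $d$, is the desired fixed point of $f$. The heart of the argument is a geometric contraction estimate transported along the chain from hypothesis (a).

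Fix the $\varepsilon$-monotonic chain $x_0 = u_0, u_1, \dots, u_m = f(x_0)$ given by (a); up to reversing indices, assume it is non-decreasing, so that by transitivity $x_0 \preccurlyeq f(x_0)$. Treating the relation-preserving case as the primary one (the reversing case is handled analogously by passing to $f^2$ with extra bookkeeping), I would prove by induction on $n$ that $f^n(u_0), f^n(u_1), \dots, f^n(u_m)$ is itself a monotonic chain from $f^n(x_0)$ to $f^{n+1}(x_0)$: the monotonicity is inherited through (b), and since consecutive pairs remain comparable and at distance less than $\varepsilon$, iterated application of (d) gives the key estimate $d(f^n(u_{i-1}), f^n(u_i)) \leq k^n d(u_{i-1}, u_i)$.

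A telescoping triangle inequality along this chain then yields $d(f^n(x_0), f^{n+1}(x_0)) \leq k^n L$ where $L = \sum_{i=1}^{m} d(u_{i-1}, u_i)$; summability of the resulting geometric series makes $\{f^n(x_0)\}$ Cauchy. In the preserving case, the comparison $x_0 \preccurlyeq f(x_0)$ propagates to $f^n(x_0) \preccurlyeq f^{n+1}(x_0)$ for every $n$, so the orbit is monotonic and converges to some $x^* \in X$ by monotonic completeness. To confirm $f(x^*) = x^*$, I would invoke (c) to ensure $f^n(x_0)$ is comparable to $x^*$, and for $n$ large enough that $d(f^n(x_0), x^*) < \varepsilon$, hypothesis (d) forces $d(f^{n+1}(x_0), f(x^*)) \leq k\, d(f^n(x_0), x^*) \to 0$; combining this with $f^{n+1}(x_0) \to x^*$ via the triangle inequality yields $f(x^*) = x^*$.

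The main obstacle is securing the monotonicity of the orbit itself: it is automatic when $f$ is relation-preserving, but in the reversing case $\{f^n(x_0)\}$ zigzags under $\preccurlyeq$, so one must either work with the preserving map $f^2$ on an invariant substructure, or splice the successive images of the chain into a single genuinely monotonic infinite sequence containing the orbit as a subsequence before monotonic completeness can be applied.
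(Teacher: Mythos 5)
Your proposal is correct and follows essentially the same route as the paper: iterate the $\varepsilon$-monotonic chain under $f$, use (b) and (d) to get $d(f^n(u_{i-1}),f^n(u_i))\leq k^n d(u_{i-1},u_i)$, telescope to conclude the orbit is a monotonic Cauchy sequence, invoke monotonic completeness, and finish via (c) and one more application of (d) once $d(f^n(x_0),x^*)<\varepsilon$. If anything, you are more explicit than the paper about the relation-reversing case (where the orbit zigzags), which the paper absorbs into its ``with no loss of generality''; your suggested remedy via $f^2$ or splicing is the right kind of repair and does not change the substance of the argument.
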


\begin{proof}
By hypothesis, there exists a finite sequence $\{u_{i}\}_{0}^{m}$ with $%
d(u_{i-1},u_{i})<\varepsilon $ and, with no loss of generality,%
\begin{equation*}
x_{0}=u_{0}\preccurlyeq u_{1}\preccurlyeq \cdots \preccurlyeq
u_{m}=f(x_{0})\preccurlyeq f(u_{1})\preccurlyeq \cdots \preccurlyeq
f(u_{m})=f^{2}(x_{0})\preccurlyeq f^{2}(u_{1})\preccurlyeq \cdots
\end{equation*}

Thus,%
\begin{eqnarray*}
d(x_{0},f(x_{0})) &\leq
&\tsum\nolimits_{i=1}^{m}d(u_{i-1},u_{i})<m\varepsilon \\
d(f(x_{0}),f^{2}(x_{0})) &\leq
&\tsum\nolimits_{1}^{m}d(f(u_{i-1}),f(u_{i}))\leq
k\tsum\nolimits_{1}^{m}d(u_{i-1},u_{i})<mk\varepsilon \\
&&\ldots \\
d(f^{n}(x_{0}),f^{n+1}(x_{0})) &\leq
&k\tsum\nolimits_{1}^{m}d(f^{n-1}(u_{i-1}),f^{n}(u_{i}))\leq
k\tsum\nolimits_{1}^{m}d(u_{i-1},u_{i}) \\
&<&mk^{n}\varepsilon ,\text{ for all }n\in 
\mathbb{N}
.
\end{eqnarray*}

Surely, there exists $n_{0}\in 
\mathbb{N}
$ such that $0<mk^{n_{0}}<1.$ We show that the monotonic sequence $%
\{x_{n}=f^{n_{0}+n}(x_{0})\}_{n=1}^{\infty }$ is a Cauchy sequence in $X.$
Indeed, given $n^{\prime }>n,$%
\begin{eqnarray}
d(x_{n},x_{n^{\prime }}) &\leq &d(x_{n},x_{n+1})+\cdots +d(x_{n^{\prime
}-1},x_{n^{\prime }})  \notag \\
&\leq &k^{n_{0}}(k^{n}+k^{n+1}+\cdots +k^{n^{\prime }-1})m\varepsilon  \notag
\\
&=&k^{n}(1+k+\cdots +k^{n^{\prime }-n-1})\varepsilon  \notag \\
&=&k^{n}(\frac{1-k^{n^{\prime }-n}}{1-k})\varepsilon  \notag \\
&<&\frac{k^{n}}{1-k}\varepsilon .  \notag
\end{eqnarray}

Thus, $d(x_{n},x_{n^{\prime }})\rightarrow 0$ as $n\rightarrow \infty .$ By
monotonic completeness, the sequence $\{x_{n}\}_{1}^{\infty }$ converges to
some $x^{\ast }\in X$ which, by assumption (c), verifies $x_{n}\preccurlyeq
x^{\ast },\forall n.$

We conclude the proof by showing that $x^{\ast }=f(x^{\ast }).$

For any $\varepsilon ^{\prime }\in (0,\varepsilon ),$ there exists $%
n_{\varepsilon ^{\prime }}\in 
\mathbb{N}
$ such that $d(x_{n},x^{\ast })<\varepsilon /2$ for all $n\geq
n_{\varepsilon ^{\prime }}.$ For all $n>n_{\varepsilon ^{\prime }}$ and
since $x_{n}\preccurlyeq x^{\ast },$ it follows that $d(f(x^{\ast
}),f(x_{n-1}))\leq kd(x^{\ast },x_{n-1}).$ Now, as $x_{n}=f(x_{n-1}),$%
\begin{eqnarray*}
d(f(x^{\ast }),x^{\ast }) &\leq &d(f(x^{\ast }),f(x_{n-1}))+d(x_{n},x^{\ast
})\leq kd(x^{\ast },x_{n-1})+d(x_{n},x^{\ast }) \\
&<&k\frac{\varepsilon ^{\prime }}{2}+\frac{\varepsilon ^{\prime }}{2}%
<\varepsilon ^{\prime }.
\end{eqnarray*}%
As $0<\varepsilon ^{\prime }<\varepsilon $ is arbitrary, $f(x^{\ast
})=x^{\ast }=\lim_{n\rightarrow \infty
}f^{n_{0}+n}(x_{0})=\lim_{n\rightarrow \infty }f^{n}(x_{0}).$
\end{proof}

\begin{remark}
(1) Clearly, if the monotonic mapping $f:X\longrightarrow X$ globally
contracts comparable elements, i.e., 
\begin{equation*}
\exists 0<k<1\text{ with }d(f(x),f(y))\leq kd(x,y)\text{ for any comparable
pair }x,y\in X,
\end{equation*}%
and if there exists $x_{0}\in X$ comparable to $f(x_{0}),$ then, given any $%
\varepsilon >0,$ there exists $u_{0}=f^{n}(x_{0})$ with $n$ large, such that 
$u_{0}$ and $f(u_{0})=f^{n+1}(x_{0})$ are comparable, and $%
d(u_{0},f(u_{0}))<\varepsilon ,$ i.e., $u_{0}$ and $f(u_{0})$ are joined by
a two element $\varepsilon $-monotonic chain. Theorem 3 thus applies to the
pair $u_{0},f(u_{0})$ to immediately obtain Nieto-Rodr\'{\i}guez-L\'{o}pez's
version of Ran-Reurings' theorem. But here, we again point out that the
relation $\preccurlyeq $ is merely transitive and not an order relation.

(2) The existence of a fixed point holds if hypothesis (c) of Theorem 3 is
replaced by the less general assumption:

(c') $f$ is sequentially continuous along the sequence $f^{n}(x_{0})$, or
more generally if $f$ is monotonic-sequentially continuous, i.e., $%
f(\lim_{n\rightarrow \infty }x_{n})=\lim_{n\rightarrow \infty }f(x_{n})$ for
any monotonic converging sequence $\{x_{n}\}_{n\in 
\mathbb{N}
}$ in $X.$
\end{remark}

To secure uniqueness of the fixed point, we require global $\varepsilon $%
-monotonic chainability of the space as well as the existence, for any given
pair of elements $x,y\in X$, of a third element $z\in X$ similarly
comparable to both $x$ and $y$ (i.e., $z\preccurlyeq x$ and $z\preccurlyeq y$
or $x\preccurlyeq z$ and $y\preccurlyeq z).$

\begin{theorem}
If $(X,\preccurlyeq ,d),$ where $\preccurlyeq $ is a transitive relation and 
$d$ is a metric, is $\varepsilon $-monotonic chainable for some $\varepsilon
>0$ and $f:X\longrightarrow X$ is a mapping satisfying:

(a) $\exists x_{0}\in X$ such that $x_{0}$ and $f(x_{0})$ are comparable;

(b) $f$ is monotonic;

(c) if $\lim_{n\rightarrow \infty }f^{n}(x_{0})=x^{\ast }\in X,$ then $%
f^{n}(x_{0})$ and $x^{\ast }$ are comparable (consistent with the
monotonicity of $f$)$,\forall n;$

(d) $\exists 0<k<1$ such that if $x,y$ in $X$ are comparable, $%
d(x,y)<\varepsilon $ implies $d(f(x),f(y))\leq kd(x,y);$

(e) every pair of elements of $X$ admits a third element similarly
comparable to both.

Then, $f$ has a unique fixed point $x^{\ast }=\lim_{n\rightarrow \infty
}f^{n}(x)$ for any initial point $x\in X,$ provided the metric $d$ is
monotonic complete.
\end{theorem}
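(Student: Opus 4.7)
The plan is to split the argument into three pieces: reduce existence to the previously proved Theorem 3, leverage hypothesis (e) together with global $\varepsilon$-monotonic chainability to show that every orbit $f^n(x)$ converges to the same limit, and then deduce uniqueness as an immediate corollary.

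For existence, I would observe that hypothesis (a) combined with the assumed $\varepsilon$-monotonic chainability of $X$ produces an $\varepsilon$-monotonic chain joining $x_{0}$ and $f(x_{0})$, so hypothesis (a) of Theorem 3 is in force. The remaining assumptions (b), (c), (d) and monotonic completeness are verbatim the same. Theorem 3 therefore supplies a fixed point $x^{\ast}=\lim_{n\to\infty}f^{n}(x_{0})$.

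For the universal convergence statement, fix an arbitrary $x\in X$. I would apply (e) to the pair $\{x,x_{0}\}$ to obtain $w\in X$ similarly comparable to both, then use $\varepsilon$-monotonic chainability to produce chains
\[
w=u_{0},u_{1},\ldots,u_{m}=x, \qquad w=v_{0},v_{1},\ldots,v_{p}=x_{0},
\]
each monotonic with consecutive distances below $\varepsilon$. Monotonicity of $f$ preserves comparability of consecutive links, and a single application of (d) gives $d(f(u_{i-1}),f(u_{i}))\leq k\varepsilon<\varepsilon$, so the $\varepsilon$-threshold is maintained and (d) can be iterated. Summing along the chain and iterating $n$ times yields
\[
d(f^{n}(w),f^{n}(x))\leq k^{n}m\varepsilon, \qquad d(f^{n}(w),f^{n}(x_{0}))\leq k^{n}p\varepsilon.
\]
Since $f^{n}(x_{0})\to x^{\ast}$, the triangle inequality forces $f^{n}(x)\to x^{\ast}$.

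Uniqueness is then immediate: if $y^{\ast}$ were another fixed point, the orbit $f^{n}(y^{\ast})=y^{\ast}$ would be constant, yet the previous step shows it must converge to $x^{\ast}$, whence $y^{\ast}=x^{\ast}$. The only delicate bookkeeping I anticipate is the inductive preservation of comparability and of the $\varepsilon$-bound for consecutive images of chain links under iteration of $f$; the geometric summation and the completeness issue are absorbed into the appeal to Theorem 3 and do not resurface.
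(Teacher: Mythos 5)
Your proposal is correct and follows essentially the same route as the paper: existence via Theorem 3 (hypothesis (a) plus global $\varepsilon$-monotonic chainability yields the needed chain from $x_{0}$ to $f(x_{0})$), then the chain estimate $d(f^{n}(u_{i-1}),f^{n}(u_{i}))\leq k^{n}\varepsilon$ propagated by monotonicity and (d) to show every orbit converges to $x^{\ast}$, with uniqueness as an immediate consequence. The only cosmetic difference is that the paper first treats the case of $x$ comparable to $x_{0}$ and then invokes (e) to reduce the general case to it, whereas you handle both chains from the intermediate element $w$ in a single three-term triangle inequality.
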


\begin{proof}
Proceeding along the lines of Ran-Reurings \cite{RR}, given an arbitrary
element $x\in X,$ we consider first the case where $x$ and $x_{0}$ are
comparable, say $x\preccurlyeq x_{0}.$ By hypothesis, $x$ and $x_{0}$ can be
joined by an $\varepsilon $-monotonic chain $x=v_{0}\preccurlyeq \cdots
\preccurlyeq v_{p}=x_{0}.$ Arguing as in the preceding proof, it is easy to
see that $d(f^{n}(v_{i-1}),f^{n}(v_{i}))\leq k^{n}\varepsilon $ for all $%
n=0,1,...$ and all $i=1,\ldots ,p.$ It follows that for any given $\delta
>0, $ there exists $n_{\delta }\in 
\mathbb{N}
,$ such that for $n\geq n_{\delta }:$%
\begin{equation*}
d(f^{n}(x),f^{n}(x_{0}))\leq k^{n}p\varepsilon <\frac{\delta }{2}\text{ and }%
d(f^{n}(x_{0}),x^{\ast })<\frac{\delta }{2}.
\end{equation*}%
Hence, $d(f^{n}(x),x^{\ast })\leq
d(f^{n}(x),f^{n}(x_{0}))+d(f^{n}(x_{0}),x^{\ast })<\frac{\delta }{2}+\frac{%
\delta }{2}=\delta ,$ i.e., $\lim_{n\rightarrow \infty
}f^{n}(x)=\lim_{n\rightarrow \infty }f^{n}(x_{0})=x^{\ast }.$

To complete the proof, let $x\in X$ be arbitrary and let $z\in X$ be
similarly comparable to both $x$ and $x_{0},$ say, 
\begin{equation*}
z\preccurlyeq x\text{ and }z\preccurlyeq x_{0}.
\end{equation*}

From the first part of the argument, we have:%
\begin{equation*}
\lim_{n\rightarrow \infty }f^{n}(z)=\lim_{n\rightarrow \infty
}f^{n}(x_{0})=x^{\ast }.
\end{equation*}

Also, $z$ and $x$ are joinable by an $\varepsilon $-monotonic chain, and as
above, for $n$ large enough, $d(f^{n}(z),f^{n}(x))$ can be made arbitrarily
small. Thus, $\lim_{n\rightarrow \infty }f^{n}(x)=\lim_{n\rightarrow \infty
}f^{n}(z)=x^{\ast }.$ This completes the proof.
\end{proof}

Quite interestingly, Rakotch \cite{R} and Hu-Kirk \cite{HK} theorems can be
obtained from Theorem 3 (in fact from Ran-Reurings' theorem). The proof
makes crucial use of the following key observations:

\begin{proposition}
\cite{HK} Let $f:X\longrightarrow X$ be a local radial contraction with
constant $0<k<1$ on a metric space $(X,d).$ Then $d(f(\gamma (0),f(\gamma
(1))\leq kl(\gamma )$ and $l(f(\gamma ))\leq kl(\gamma )$ for any
rectifiable path $\gamma :[0,1]\longrightarrow X.$
\end{proposition}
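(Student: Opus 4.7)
The plan is to reduce the first inequality to a telescoping sum of pointwise local radial estimates along a carefully chosen partition of $[0,1]$, and then deduce the second inequality by applying the first one piecewise.

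For the first inequality, continuity of $\gamma$ and positivity of the local radial radius $\varepsilon_{\gamma(t)}$ at each $t\in[0,1]$ give an open interval $V_t\ni t$ with $\gamma(V_t)\subset B(\gamma(t),\varepsilon_{\gamma(t)}/2)$. Compactness of $[0,1]$ yields a finite subcover $V_{\tau_1},\ldots,V_{\tau_N}$, from which I construct a partition $0=t_0<t_1<\cdots<t_n=1$ together with centers $\tau_{j_i}\in[t_{i-1},t_i]$ (one per subinterval) such that $\gamma([t_{i-1},t_i])\subset B(\gamma(\tau_{j_i}),\varepsilon_{\gamma(\tau_{j_i})}/2)$. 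This is done by inserting the $\tau_j$ themselves as partition points, together with extra points in the overlaps $V_{\tau_j}\cap V_{\tau_{j+1}}$ to ensure each subinterval is covered by a single $V_{\tau_{j_i}}$ with $\tau_{j_i}$ at an endpoint.

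For each $i$ I then apply local radial contraction at $\gamma(\tau_{j_i})$ (both $\gamma(t_{i-1})$ and $\gamma(t_i)$ lie within $\varepsilon_{\gamma(\tau_{j_i})}/2$ of it), combine with the triangle inequality, and use the rectifiability bound $d(\gamma(a),\gamma(b))\le l(\gamma|_{[a,b]})$ (valid since $\tau_{j_i}\in[t_{i-1},t_i]$):
\begin{equation*}
d(f(\gamma(t_{i-1})),f(\gamma(t_i)))\le k\bigl[d(\gamma(t_{i-1}),\gamma(\tau_{j_i}))+d(\gamma(\tau_{j_i}),\gamma(t_i))\bigr]\le k\,l(\gamma|_{[t_{i-1},t_i]}).
\end{equation*}
Summing over $i$ and using additivity of length:
\begin{equation*}
d(f(\gamma(0)),f(\gamma(1)))\le\sum_{i=1}^{n}d(f(\gamma(t_{i-1})),f(\gamma(t_i)))\le k\sum_{i=1}^{n}l(\gamma|_{[t_{i-1},t_i]})=k\,l(\gamma).
\end{equation*}

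For the second inequality, the first one already applies to each sub-path $\gamma|_{[s_{r-1},s_r]}$, so for any partition $Q=\{0=s_0<s_1<\cdots<s_M=1\}$ of $[0,1]$,
\begin{equation*}
\sum_{r=1}^{M}d(f(\gamma(s_{r-1})),f(\gamma(s_r)))\le k\sum_{r=1}^{M}l(\gamma|_{[s_{r-1},s_r]})=k\,l(\gamma),
\end{equation*}
and taking the supremum over $Q$ yields $l(f\circ\gamma)\le k\,l(\gamma)$.

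The main obstacle is the bookkeeping in the first step: local radial contraction only bounds $d(f(x),f(y))$ when $x$ is the center, so the triangle inequality must be routed through $\gamma(\tau_{j_i})$, and it is essential that $\tau_{j_i}$ sit inside $[t_{i-1},t_i]$ so that the bracketed sum telescopes into $l(\gamma|_{[t_{i-1},t_i]})$ rather than accumulating. A naive Lebesgue-number argument does not by itself guarantee this placement, which is why the partition must be deliberately constructed to include the centers.
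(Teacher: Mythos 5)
The paper does not give its own proof of this proposition: it states it as a result of Hu and Kirk and refers the reader to \cite{HK}, so there is no in-paper argument to compare against. Your proof is correct and self-contained: the covering/chain construction that places each center $\tau_{j_i}$ inside its subinterval $[t_{i-1},t_i]$ is exactly what is needed so that the radial estimate (which only controls distances measured from the center) can be routed through $\gamma(\tau_{j_i})$ and the two-chord sum be dominated by $l(\gamma|_{[t_{i-1},t_i]})$; additivity of length then gives the first inequality, and applying it to sub-paths and taking the supremum over partitions gives the second. This is the standard argument (and in the same spirit as the original one in \cite{HK}); the only points worth making fully explicit in a written version are the irredundant-subcover/ordering step that guarantees consecutive intervals overlap so the partition points $s_j$ with $\tau_j\le s_j\le\tau_{j+1}$ exist, and the harmless passage from the strict inequality in the definition of local radial contraction to the non-strict one you use (which also covers the degenerate case $\gamma(t_{i-1})=\gamma(\tau_{j_i})$).
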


The reader is referred to \cite{HK} for the proof.

\begin{corollary}
\cite{HK} Let $(X,d)$ be a complete metric space and $f:X\longrightarrow X$
be a local radial contraction with constant $k\in (0,1)$. Suppose that there
exists $x_{0}\in X$ such that $x_{0}$ and $f(x_{0})$ are joined by a
rectifiable path. Then $f$ has a fixed point.
\end{corollary}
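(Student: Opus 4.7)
The plan is to reduce the corollary to Theorem 3 (equivalently, to the Banach contraction principle) by replacing $X$ with the path-component of $x_0$ and $d$ with the induced path metric, exactly along the lines of the Hu-Kirk observation mentioned in the introduction.

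First, let $\tilde{X}$ denote the set of points of $X$ that are joined to $x_{0}$ by a rectifiable path, and define a binary relation $\preccurlyeq$ on $\tilde{X}$ by $x\preccurlyeq y$ if and only if $x$ and $y$ are joined by some rectifiable path. Concatenation of rectifiable paths makes $\preccurlyeq$ transitive (in fact an equivalence on $\tilde{X}$ in which every two elements are comparable). Endow $\tilde{X}$ with the path metric $\tilde{d}(x,y):=\inf_{\gamma\in\Gamma(x,y)}l(\gamma)$; since any partition of a path $\gamma$ from $x$ to $y$ bounds $d(x,y)$ from above by $l(\gamma)$, one has $d\leq\tilde{d}$ on $\tilde{X}$.

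Next, I would verify the hypotheses of Theorem 3 for the triple $(\tilde{X},\preccurlyeq,\tilde{d})$ and the restriction of $f$. Proposition 6 gives $l(f\circ\gamma)\leq kl(\gamma)$ for every rectifiable path $\gamma$, which simultaneously shows (i) that $f$ maps $\tilde{X}$ into itself (concatenate the given path from $x_{0}$ to $f(x_{0})$ with $f\circ\gamma$ where $\gamma$ joins $x_{0}$ to an arbitrary $x\in\tilde{X}$), (ii) that $f$ is relation-preserving, hence monotonic (condition (b)), and (iii) after taking infima over $\Gamma(x,y)$, the global Lipschitz estimate $\tilde{d}(f(x),f(y))\leq k\tilde{d}(x,y)$, which strictly implies (d). Condition (c) is vacuous, since every pair of elements of $\tilde{X}$ is comparable. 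For (a), given any $\varepsilon>0$, subdivide the given rectifiable path $\gamma_{0}$ from $x_{0}$ to $f(x_{0})$ into finitely many subpaths each of length strictly less than $\varepsilon$; their endpoints form an $\varepsilon$-monotonic chain in $(\tilde{X},\preccurlyeq,\tilde{d})$ joining $x_{0}$ to $f(x_{0})$.

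The main obstacle is verifying that $\tilde{d}$ is monotonic complete, which, because every sequence in $\tilde{X}$ is trivially monotonic, amounts to genuine completeness of $(\tilde{X},\tilde{d})$. Given a $\tilde{d}$-Cauchy sequence, the inequality $d\leq\tilde{d}$ makes it $d$-Cauchy, so it converges in $(X,d)$ to some $x^{\ast}$. I would then extract a subsequence $\{x_{n_{k}}\}$ with $\tilde{d}(x_{n_{k}},x_{n_{k+1}})<2^{-k}$, choose rectifiable paths $\gamma_{k}$ joining consecutive terms with $l(\gamma_{k})<2^{-k+1}$, reparameterize each $\gamma_{k}$ on $[1-2^{-k+1},1-2^{-k}]$, and assemble them into a single continuous map $\gamma:[0,1)\to X$; extending by $\gamma(1):=x^{\ast}$, continuity at $t=1$ follows from the tail estimate $d(\gamma(t),x^{\ast})\leq\sum_{k'\geq k}2^{-k'+1}$ valid for $t\in[1-2^{-k+1},1)$, while the geometric bound $\sum_{k}l(\gamma_{k})<\infty$ shows $\gamma$ is rectifiable. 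This simultaneously places $x^{\ast}$ in $\tilde{X}$ and yields $\tilde{d}(x_{n_{k}},x^{\ast})\to 0$, which together with the $\tilde{d}$-Cauchy property gives $\tilde{d}$-convergence of the original sequence. An application of Theorem 3 then delivers the desired fixed point.
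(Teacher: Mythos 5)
Your argument is correct, and it essentially realizes the Hu--Kirk reduction recalled in the Preliminaries, but it is not the route the paper takes. You work on the full set $\tilde{X}$ of points rectifiably joined to $x_{0}$, with the symmetric transitive relation ``joined by a rectifiable path'' and the path metric $\tilde{d}$; Proposition 6 then gives global $f$-invariance of $\tilde{X}$, monotonicity, and the global Lipschitz bound $\tilde{d}(f(x),f(y))\leq k\tilde{d}(x,y)$, so all pairs are comparable, hypothesis (c) is vacuous, and Theorem 3 degenerates essentially to the Banach principle on $(\tilde{X},\tilde{d})$ once you prove its completeness by the concatenation-of-paths construction. The paper instead restricts to the orbit $X_{0}=\{f^{n}(x_{0})\}$, equips it with the total order given by the iteration index and with the ad hoc metric $d_{0}$ built from the lengths $l(f^{i}(\gamma_{0}))$, proves $f$ is a $d_{0}$-contraction on comparable elements and that a tail of the orbit is $\varepsilon$-monotonically chained, then completes $X_{0}$ in $d_{0}$ (by the same kind of path-gluing argument you use), extends the relation and the map to the closure, and only then invokes Theorem 3. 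Your version buys simplicity: $\tilde{X}$ is already $f$-invariant and $\tilde{d}$-complete, so no artificial metric, no extension of the relation or of $f$ to a closure is needed; its cost is that the relational structure becomes trivial, so the application of Theorem 3 does not exercise its order-theoretic content. The paper's version keeps the relation genuinely at work (a nontrivial total order on the orbit) and uses completeness only over the orbit closure, in line with its stated point that completeness is needed only along chains. Two small points you should make explicit: the subdivision of $\gamma_{0}$ into subpaths of length less than $\varepsilon$ uses the (standard) continuity of the arc-length function of a continuous rectifiable path, and $f\circ\gamma$ is itself a continuous path because a local radial contraction is continuous; both are routine and are implicitly used by the paper as well.
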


\begin{proof}
By hypothesis, there exists a rectifiable path $\gamma _{0}$ joining $x_{0}$
to $f(x_{0}).$ By Proposition 6 (i), each path $f^{n}(\gamma _{0})$ has
length smaller than $k^{n}l(\gamma _{0})$ and joins the element $%
f^{n}(x_{0}) $ to $f^{n+1}(x_{0}).$ Let $X_{0}=\{f^{n}(x_{0})\}_{n=0}^{%
\infty }$ (with $f^{0}(x_{0})=x_{0}).$ Obviously, $f(X_{0})=\{f^{n}(x_{0})%
\}_{n=1}^{\infty }\subset X_{0}.$ Define a total order on $X_{0}$ as
follows: $f^{n}(x_{0})\preccurlyeq f^{m}(x_{0})\Longleftrightarrow n\leq m.$
Clearly, $x_{0}\preccurlyeq f(x_{0})$ and $f$ is obviously monotonic on $%
X_{0}.$ Define a metric $d_{0}$ on $X_{0}$ as:%
\begin{gather*}
d_{0}(f^{n}(x_{0}),f^{m}(x_{0}))=d_{0}(f^{m}(x_{0}),f^{n}(x_{0}))=\tsum%
\nolimits_{i=n}^{m-1}l(f^{i}(\gamma _{0}))\text{ for }n<m, \\
d_{0}(x,y)=0\Leftrightarrow x=y=f^{n}(x_{0})\text{ for some }n\in
\{0,1,2,\ldots \}.
\end{gather*}

Note that the initial metric $d,$ the path metric$\footnote{%
Note that $X_{0}$ is a subset of the space $\tilde{X}:=\{x\in X;\Gamma
(x_{0};x)\neq \emptyset \}$ equipped with the path metric $\tilde{d}%
(x,y)=\inf_{\gamma \in \Gamma (x;y)}l(\gamma )$ defined in \cite{HK} and
mentioned in the Preliminaries above.}$ $\tilde{d},$ and the metric $d_{0}$
verify $d\leq \tilde{d}\leq d_{0}$ on $X_{0}.$

For any given pair $x,y\in X_{0},$ say $x=f^{n}(x_{0})$ and $y=f^{m}(x_{0})$
with $x\preccurlyeq y,$ it follows from Proposition 6 (i):%
\begin{eqnarray*}
d_{0}(f(x),f(y))
&=&d_{0}(f^{n+1}(x_{0}),f^{m+1}(x_{0}))=\tsum\nolimits_{i=n+1}^{m}l(f^{i}(%
\gamma _{0})) \\
&\leq &k\tsum\nolimits_{i=n}^{m-1}l(f^{i}(\gamma _{0}))=kd_{0}(x,y),
\end{eqnarray*}%
i.e., $f$ is a contraction on $X_{0}$ relative to the metric $d_{0}.$

Given an arbitrary but fixed $\varepsilon >0,$ one may assume, with no loss
of generality, that $d_{0}(f^{n}(x_{0}),f^{n+1}(x_{0}))<\varepsilon $ for $%
n=0,1,2,\ldots $ Indeed, since $k_{n}\downarrow 0^{+}$ as $n\rightarrow
\infty ,$ there exists a positive integer $n_{\varepsilon }$ large enough as
to have $k^{n}d_{0}(x_{0},f(x_{0}))<\varepsilon ,$ for all $n\geq
n_{\varepsilon }.$ One could then replace the full sequence of iterates $%
\{f^{n}(x_{0})\}_{n=0}^{\infty }$ by its tail $\{f^{n}(x_{0})\}_{n\geq
n\varepsilon }$ which verifies:%
\begin{equation*}
d_{0}(f^{n}(x_{0}),f^{n+1}(x_{0}))\leq
k^{n}d_{0}(x_{0},f(x_{0}))<\varepsilon \text{ for all }n\geq n_{\varepsilon
},
\end{equation*}%
and view $f^{n_{\varepsilon }}(x_{0})$ as the initial point in lieu of $%
x_{0}.$ Therefore, every two elements in $X_{0}$ can be joined by an $%
\varepsilon $-monotonic chain.

It was established in \cite{HK} that if the original metric $d$ is complete
on $X$ then the path metric $\tilde{d}$ is complete on the space $\tilde{X}$
of points joinable from $x_{0}$ by a rectifiable path. Naturally, the
closure of $X_{0}$ for the metric $d_{0}$ must also be complete. Indeed, let 
$\{x_{r}\}$ be a Cauchy sequence in $(X_{0},d_{0}).$ Since $d\leq d_{0},$
the sequence $\{x_{m}\}$ is also Cauchy in $(X,d),$ implying $%
\lim_{m\rightarrow \infty }d(x_{m},x^{\ast })=0$ for some $x^{\ast }\in X.$

We establish first that every element $x_{m}$ of the Cauchy sequence can be
joined to $x^{\ast }$ by a rectifiable path $\gamma .$ Indeed, let $%
\{\varepsilon _{i}\}$ be a sequence of summable positive real numbers, i.e., 
$\tsum\nolimits_{i=1}^{\infty }\varepsilon _{i}<\infty .$ For each $i,$
choose $m_{i}$ large enough so that $l(\gamma
_{i})=d_{0}(x_{m_{i}},x_{m_{i+1}})<\varepsilon _{i}$ where $\gamma _{i}$ is
the rectifiable path joining $x_{m_{i}}=f^{n_{m_{i}}}(x_{0})$ to $%
x_{m_{i+1}}=f^{n_{m_{i+1}}}(x_{0})$ consisting of finite union $%
f^{n_{m_{i}}}(\gamma _{0})\cup \ldots \cup f^{n_{m_{i}}-1}(\gamma _{0}).$
Each path $\gamma _{i}$ can be rescaled as a path $\gamma :[\frac{1}{i+1},%
\frac{1}{i}]\longrightarrow X.$ Define a path $\gamma :[0,1]\longrightarrow
X $ by putting $\gamma (t)=\gamma _{i}(t)$ for $t\in \lbrack \frac{1}{i+1},%
\frac{1}{i}]$ and $\gamma (0)=x^{\ast }.$ By construction, the path $\gamma $
is continuous on $(0,1].$ To ascertain continuity at $t=0,$ let $%
t_{k}\downarrow 0^{+}.$ Observe that each $t_{k}$ is in some interval $[%
\frac{1}{i+1},\frac{1}{i}]$ and, for all $k^{\prime }s$ large enough,%
\begin{eqnarray*}
d(\gamma (t_{k}),x^{\ast }) &\leq &d(\gamma
(t_{k}),x_{m_{i}})+d(x_{m_{i}},x^{\ast }) \\
&\leq &d(x_{m_{i}},x_{m_{i+1}})+d(x_{m_{i}},x^{\ast }) \\
&\leq &d_{0}(x_{m_{i}},x_{m_{i+1}})+d(x_{m_{i}},x^{\ast }) \\
&<&\varepsilon _{i}+d(x_{m_{i}},x^{\ast }).
\end{eqnarray*}

As $t_{k}\rightarrow 0,i\rightarrow \infty ,m_{i}\rightarrow \infty ,$ and $%
\varepsilon _{i}\rightarrow 0,$ thus $d(\gamma (t_{k}),x^{\ast })\rightarrow
0,$ i.e., $\gamma (t_{k})\rightarrow \gamma (0).$ It should be noted, in
addition, that the continuous path $\gamma $ joining $x^{\ast }$ and $%
x_{m_{1}}$ verifies $l(\gamma )\leq \tsum\nolimits_{i=1}^{\infty }l(\gamma
_{i})\leq \tsum\nolimits_{i=1}^{\infty }\varepsilon _{i}<\infty .$ Now,
define $d_{0}(x_{m_{i}},x^{\ast })=l(\gamma |_{[0,\frac{1}{i}]})$ and note
that 
\begin{equation*}
d_{0}(x_{m_{i}},x^{\ast })=l(\gamma |_{[0,\frac{1}{i}]})\leq
\tsum\nolimits_{j=i}^{\infty }\varepsilon _{j}\rightarrow _{i\rightarrow
\infty }0.
\end{equation*}

Since $\{x_{m_{i}}\}$ is a subsequence of the Cauchy sequence $\{x_{m}\}$ in 
$X_{0},$ it follows that $\lim_{m\rightarrow \infty }d_{0}(x_{m},x^{\ast
})=0,$ i.e., $x^{\ast }\in \overline{X_{0}}^{d_{0}},$ i.e., $\overline{X_{0}}%
^{d_{0}}$ is $d_{0}$-complete. Let us extend the binary relation $%
\preccurlyeq $ to $\overline{X_{0}}^{d_{0}}$ by putting:%
\begin{equation*}
\forall x\in X_{0},\forall z\in \overline{X_{0}}^{d_{0}}\setminus
X_{0},x\preccurlyeq z.
\end{equation*}

To conclude the proof, it remains to note that the mapping $f$ (a $d_{0}-$%
contraction on comparable elements of $X_{0})$ naturally extends to a
contraction on comparable elements of $\overline{X_{0}}^{d_{0}},$ thus
verifying all hypotheses of Theorem 3 on $\overline{X_{0}}^{d_{0}}.$
\end{proof}

\begin{remark}
Of course, it is much simpler to prove Corollary 7 by observing that $%
X_{0}=\{f^{n}(x_{0})\}_{n=0}^{\infty }$ is a Cauchy sequence in $(X,d)$,
hence convergent to a fixed point of $f.$ Indeed, $\forall m>n\geq
n_{\varepsilon },$ we do have%
\begin{eqnarray*}
d(f^{m}(x_{0}),f^{n}(x_{0})) &\leq
&\tsum\nolimits_{i=n}^{m-1}d(f^{i+1}(x_{0}),f^{i}(x_{0})) \\
&\leq &\tsum\nolimits_{i=n}^{m-1}k^{i+1}l(\gamma _{0}) \\
&=&k^{n+1}(1+\cdots +k^{m-(n+1)})l(\gamma _{0}) \\
&=&k^{n+1}(\frac{1-k^{m-n}}{1-k})l(\gamma _{0}) \\
&<&\frac{k^{n}}{1-k}l(\gamma _{0})\rightarrow _{n\rightarrow \infty }0.
\end{eqnarray*}%
But our point\ here is to show that the results of Hu-Kirk and Rakotch
follow also from Theorem 3 (and indeed from the Ran-Reurings
theorem).\medskip
\end{remark}

\textbf{Acknowledgment:} The author is indebted to Marlene Frigon and
Mohamed A. Khamsi for bringing to his attention the remarkable paper of
Jacek Jachymski \cite{J} where the Ran-Reurings fixed point theorem is
significantly extended to complete metric spaces endowed with a directed
graph, as well as to reference \cite{AK} for the extension of Caristi's
fixed point theorem to such spaces. The use of the language and concepts
from graph theory in \cite{J} allows for the unification of the main results
in \cite{Ed}, \cite{HK}, \cite{NR}, \cite{R}, and \cite{RR} and for the
consideration of a quasi-order (a reflexive and transitive relation) instead
of a partial order. Theorem 3 is to be compared with Theorem 3.4 in \cite{J}%
; it does not require the completeness of the metric over the whole space $X$
but rather over chains. In addition, while keeping the Ran and Reurings'
perspective of a compatibility between a metric and a (merely transitive)
binary relation, the argument used here are simple and straightforward.

The author also thanks Dr. Asma Rashid Butt for sparking his interest in
these aspects of metric fixed point theory.\medskip

\end{document}